\theoremstyle{plain}
\newtheorem{theorem}{Theorem}
\newtheorem{lemma}{Lemma}
\newtheorem{conjecture}{Conjecture}
\theoremstyle{definition}
\newtheorem{remark}{Remark}
\begin{document}
\title[Asymptotics of the exterior conformal modulus]{Asymptotics of the exterior conformal modulus of an arbitrary quadrilateral under stretching map}

\author{Semen~R.~Nasyrov}
\address{Kazan Federal University, Kremlyovskaya str. 35, Tatarstan, 420008,
Russian Federation}
\email{snasyrov@kpfu.ru}

\author{Giang~V.~Nguyen}
\address{Kazan Federal University, Kremlyovskaya str. 35, Tatarstan, 420008,
Russian Federation}
\email{nvgiang.math@gmail.com}

\begin{abstract}
In this paper, we focus on studying the distortion of the exterior conformal modulus of a quadrilateral of sufficiently arbitrary form under the stretching map along the abscissa axis with coefficient $H\to\infty$. By using the properties of quasiconformal transformations and taking into account some facts from the theory of elliptic integrals, we confirm that the asymptotic behavior of this modulus does not depend on the shape of the boundary of the quadrilateral. Especially, it is equivalent to $(1/\pi)\log H$ as $H\to\infty$. Therefore, we give a solution to the Vuorinen problem for the exterior region of an arbitrary quadrilateral.
\end{abstract}

\keywords{\it quadrilateral, conformal modulus, exterior conformal modulus, quasiconformal mapping, convergence of domains to a kernel.}

\maketitle

\section{Introduction}
Let $Q$ be a Jordan domain and $z_j\, (1\le j\le 4)$ be four fixed points on the boundary of $Q$, which are arranged such that the increasing of the index $k$ corresponds to the positive direction
when $z_j$ runs along the boundary $\partial Q$. Then $\mbox{\boldmath$Q$}=(Q; z_1, z_2, z_3, z_4)$ is called a quadrilateral with the domain $Q$, and the vertices $z_j\, (1\le j\le 4)$. With the convention $z_5\equiv z_1$, we will define the arcs $(z_k, z_{k+1})$ of the boundary $\partial Q$ as the edges of the quadrilateral.

For a given quadrilateral $\mbox{\boldmath$Q$}=(Q; z_1, z_2, z_3, z_4)$, let $\Gamma$ be the family of all curves in $Q$ connecting $(z_1, z_2)$ and $(z_3, z_4)$. The interior conformal modulus $\operatorname{Mod}(\mbox{\boldmath$Q$})$ of
$\mbox{\boldmath$Q$}$ is the extremal length $\lambda(\Gamma)$ of $\Gamma$ (for the concept of extremal length see, e.g., \cite[Chap.~1, Sect.~D]{ahlfors2006lectures}, \cite[Chap.~3,
Sect.~11]{kuhnau2005conformal}). 

\begin{remark} The interior conformal modulus of a quadrilateral $\mbox{\boldmath$Q$}= (Q; z_1, z_2, z_3, z_4)$ can also be defined similarly in the case when $Q$ is not a Jordan domain, but only a simply connected domain with the nondegenerate boundary, and $z_1$, $z_2$, $z_3$, $z_4$ are four different boundary prime ends of $Q$ (see, e.g., \cite[Chap.~2, Sect.~3]{goluzin1969geometric}).
\end{remark}

Now we consider the family of all curves $\widetilde{\Gamma}$,  joining the edges $(z_1, z_2)$ and $(z_3, z_4)$ in the complement of the domain $Q$ to the extended complex plane. Then the extremal length $\lambda(\widetilde{\Gamma})$ of $\widetilde{\Gamma}$, is called the exterior conformal modulus of the quadrilateral $\mbox{\boldmath$Q$}$, which will be denoted by
$\operatorname{ExtMod}(\mbox{\boldmath$Q$})$. It is clear that if we denote $\mbox{\boldmath$Q$}^c=(Q^c; z_4, z_3, z_2, z_1)$, where $Q^c=\overline{\mathbb{C}}\setminus Q$ is the complement of $Q$ in the extended complex plane, then $\operatorname{Mod}(\mbox{\boldmath$Q$}^c)=\operatorname{ExtMod}(\mbox{\boldmath$Q$})$.

We note that the interior conformal modululs of the quadrilateral is also closely connected with the Dirichlet integral (see, e.g., \cite{dubinin2014condenser}). For a given quadrilateral $\mbox{\boldmath$Q$}=(Q;
z_1, z_2, z_3, z_4)$ with the piecewise smooth boundary and a function $u$ that is harmonic in the domain $Q$, continuous in its closure and satisfying the Dirichlet-Neumann boundary conditions  $u(z) = 0$ for $z\in (z_2, z_3)$ and $u = 1$ for
$z\in (z_4, z_1)$, $\frac{\partial u}{\partial n}(z)=0$ for $z\in (z_1, z_2)\cup (z_3, z_4)$, where $n$ is the direction of the outward normal to the boundary $Q$, the interior conformal modulus of the quadrilateral $\mbox{\boldmath$Q$}$ is equal to the Dirichlet integral
$$\operatorname{Mod}(\mbox{\boldmath$Q$})
=\int\limits_{Q}\left\{\left(\frac{\partial u}{\partial x}\right)^2+\left(\frac{\partial u}{\partial y}\right)^2\right\}dxdy.$$
The exterior conformal modulus $\operatorname{ExtMod}(Q)$ is equal to the Dirichlet integral of a function that is harmonic in the complement $Q^c$ of the domain $Q$ and continuous in its closure, with similar boundary conditions.

Finally,  the interior conformal modulus can be defined in terms of conformal mappings.  If $f : Q\to [0, 1]\times[0, m]$ is a conformal mapping of the domain $Q$ onto the rectangle such that the vertices of $Q$ go to the vertices of the rectangle, and $f(z_1) = 0,$ then $\operatorname{Mod}(\mbox{\boldmath$Q$}) = m$; for the exterior modulus, we must take a conformal mapping onto the exterior rectangle $Q^c$ of the domain $Q$.

The interior conformal modulus of a quadrilateral is invariant under conformal transformations. It is also quasiinvariant under quasiconformal mappings (see, e.g., \cite[Chap.~2,
Sect.~A]{ahlfors2006lectures}, \cite[Chap.~3, Sect.~22]{kuhnau2005conformal}): if $f_{\mathcal{K}}$ is an $\mathcal{K}$-quasiconformal mapping, then for any quadrilateral $\mbox{\boldmath$Q$}$ we have the inequality
\begin{align}\label{eq:quasi_prop}
\dfrac{1}{\mathcal{K}}\operatorname{Mod}(\mbox{\boldmath$Q$})\leq \operatorname{Mod}(f_{\mathcal{K}}(\mbox{\boldmath$Q$}))\leq \mathcal{K} \operatorname{Mod}(\mbox{\boldmath$Q$}).
\end{align}
Here $f_{\mathcal{K}}(\mbox{\boldmath$Q$})$ is a quadrilateral whose domain and vertices are the images of the domain and vertices of the quadrilateral $\mbox{\boldmath$Q$}$ under the mapping $f_{\mathcal{K}}$.

\begin{remark} Obviously, the analogue of the double inequality \eqref{eq:quasi_prop} is still valid for the exterior conformal modulus.
\end{remark}

In 2005, Prof. M.~Vuorinen stated the problem of investigating the distortion of the conformal moduli of domains under the action of the $H$-quasiconformal mapping $f_H$ which defined as follows
\begin{equation}\label{eq:stretch_map}
f_H:x+iy\mapsto Hx+iy\quad (H>1).
\end{equation}
Especially,  he was interested in what is the behavior of the corresponding conformal moduli as $H\to\infty$ ? In this case, it is of interest to study the distortion of conformal moduli of both quadrilaterals and doubly connected domains.

Many works have been devoted to the study of the Vuorinen problem. In particular, \cite{dautova2018, dautova2019, nasyrov2015riemann, nvgiang2021, nguyen2022}) studied the distortion of the interior conformal modulus of an arbitrary bounded or unbounded doubly connected domain. In addition, in \cite{nvgiang2021, nguyen2022} some questions related to the asymptotic behavior of the exterior conformal modulus of a quadrilateral has been also studied.

Investigating the distortion of the behavior of the exterior conformal modulus of a quadrilateral under the action of $f_H$ is really worthy of attention.  W.~Bickley \cite{bickley} and, later,
P.~Duren and J.~Pfaltzgraff \cite{duren1993robin}  suggested some formulas connecting the values of the interior and exterior conformal moduli of the rectangle. By 2013, using the results of
\cite{duren1993robin}, M.~Vuorinen and H.~Zhang \cite[Theorem 4.3]{vuorinen2013exterior} established some upper and lower bounds for the exterior conformal modulus of the rectangle with a fixed interior modulus. 

In connection with the study of the exterior conformal moduli presented in \cite{vuorinen2013exterior, nasyrov2021moduli}, the first author of this paper gave a conjecture about the asymptotics of the exterior conformal modulus under the action of the extension $f_H$ as $H\to\infty$.  Let us recall its content.

Let $f$ and $g$ be two continuous functions on the segment $[a,b]$, $-\infty< a<b<+\infty$,  such that $f(x)<g(x)$ for all $x\in[a,b]$.  Put $\mbox{\boldmath$Q$}= (Q; z_1, z_2, z_3, z_4)$, where $Q$ is bounded by two vertical segments $[z_1, z_2]$ and $[z_3, z_4]$ with endpoints $z_1=a+ig(a) $, $z_2=a+if(a)$, $z_3=b+if(b)$, $z_4=b+ig(b)$, and two curves are graphs of functions $f, g$:
$$
\mathcal{G}_1 = \{x + iy\,:\,y = f(x),\;a \leq x \leq b\},\quad \mathcal{G}_2 = \{x + iy\,:\,y = g(x),\;a \leq x \leq b\}.
$$
In what follows, the set of all quadrilaterals of this form (i.e., quadrilaterals corresponding to arbitrary segments $[a,b]$ and functions $f$ and $g$ satisfying the restrictions described above) will be denoted by $\mathfrak{S}$.

Denote by $\mbox{\boldmath$Q$}_H$ the quadrilateral which is the image of $\mbox{\boldmath$Q$}$ under the mapping~$f_H$; 
we mean that the domain and vertices of $\mbox{\boldmath$Q$}$  under the mapping $f_H$, go to the domain and vertices of the quadrilateral $\mbox{\boldmath$Q$}_H$.  Prof. S.~R.~Nasyrov predicted that
\begin{conjecture}[S.~R.~Nasyrov]\label{conject_nasyrov}
The asymptotic behavior of the exterior conformal modulus of $\mbox{\boldmath$Q$}_H$ does not depend on the shape of the boundary of $Q$ as $H\to\infty$. Moreover, $\operatorname{ExtMod}(\mbox{\boldmath$Q$}_H)\sim (1/\pi)\log H$ as $H\to\infty$.
\end{conjecture}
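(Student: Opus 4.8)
The plan is to trap $\operatorname{ExtMod}(\mbox{\boldmath$Q$}_H)$ between $(1/\pi)\log H-O(1)$ and $(1/\pi)\log H+O(1)$; the asymptotic equivalence $\operatorname{ExtMod}(\mbox{\boldmath$Q$}_H)\sim(1/\pi)\log H$, and its independence of the shape of $\partial Q$, will then both follow, since the shape of $f,g$ will only ever enter the $O(1)$ terms. I shall use two of the descriptions of the exterior modulus recalled above: the extremal-length one for the lower bound, and the Dirichlet-integral one together with the Dirichlet principle (the harmonic competitor minimizes, so any admissible test function overestimates the energy) for the upper bound. Write $p_a,p_b$ for the midpoints and $h_a=g(a)-f(a)$, $h_b=g(b)-f(b)$ for the lengths of the two vertical edges of $\mbox{\boldmath$Q$}_H$; after the stretch these sit at real parts $Ha$ and $Hb$, so $|p_a-p_b|\ge H(b-a)$. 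I will also use, as a preliminary step for merely continuous $f,g$, a replacement of $\mbox{\boldmath$Q$}$ by quadrilaterals with piecewise-linear boundary, the resulting change in $\operatorname{ExtMod}$ being controlled by the quasiconformal quasi-invariance \eqref{eq:quasi_prop} with dilatation tending to $1$.

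\emph{Lower bound.} Test the family $\widetilde{\Gamma}_H$ of curves joining the edges $(z_1,z_2)$ and $(z_3,z_4)$ in $Q_H^c$ with the metric that equals $|z-p_a|^{-1}$ on the annulus $\{h_a<|z-p_a|<\varepsilon H\}$, equals $|z-p_b|^{-1}$ on $\{h_b<|z-p_b|<\varepsilon H\}$, and vanishes elsewhere, for a fixed $\varepsilon\in(0,(b-a)/2)$. For $H$ large these two annuli are disjoint; every curve of $\widetilde{\Gamma}_H$ begins inside the inner disc of the first annulus (the whole edge $(z_1,z_2)$ lies in $\{|z-p_a|\le h_a/2\}$) and ends inside the inner disc of the second, hence crosses each annulus "radially", and the elementary estimate of $(\inf_\gamma\int_\gamma\rho)^2/\iint\rho^2$ gives $\operatorname{ExtMod}(\mbox{\boldmath$Q$}_H)\ge(1/\pi)\log H-O(1)$. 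This step uses only $b>a$ and the boundedness of $h_a,h_b$; it does not see the shape of $\partial Q$ at all.

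\emph{Model computation and upper bound.} For the rectangle $\mbox{\boldmath$R$}_L$ of side ratio $L:1$, viewed as a quadrilateral with the two short sides distinguished, I will first prove $\operatorname{ExtMod}(\mbox{\boldmath$R$}_L)=(1/\pi)\log L+O(1)$ as $L\to\infty$, from the Schwarz--Christoffel representation of the conformal map of a half-plane onto the exterior of $\mbox{\boldmath$R$}_L$ --- an elliptic integral, the four $\pi/2$ corners furnishing four branch points --- combined with the Bickley \cite{bickley} and Duren--Pfaltzgraff \cite{duren1993robin} relations between the interior and exterior moduli of a rectangle and the estimates of Vuorinen--Zhang \cite{vuorinen2013exterior}; the asymptotics come from $\mathrm{K}(k)\sim\log\bigl(4/\sqrt{1-k^{2}}\bigr)$ for the complete elliptic integral of the first kind as $k\to1^-$. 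The same analysis, applied to the complement of a half-strip of fixed height truncated by a circle of radius $R$ (a domain conformally a long thin rectangle), gives that the Dirichlet energy, out to radius $R$, of the harmonic function that is $0$ on one edge of the half-strip and $1$ on the other and Neumann on the mouth, equals $(1/2\pi)\log R+O(1)$. Then, to bound $\operatorname{ExtMod}(\mbox{\boldmath$Q$}_H)$ from above, I will build an admissible $v$ on $Q_H^c$ with $v\equiv0$ on $\mathcal{G}_1$, $v\equiv1$ on $\mathcal{G}_2$: inside the disc of radius $\varepsilon H$ about $p_a$ let $v$ solve the natural mixed boundary value problem on $Q_H^c$, estimating its energy against the half-strip-complement model via the fact that this end-neighbourhood converges, in the Carath\'eodory kernel sense, to the complement of the half-strip $\{X>0,\ f(a)<Y<g(a)\}$ --- true because, after the stretch, $\mathcal{G}_1$ and $\mathcal{G}_2$ become asymptotically horizontal near the endpoint --- so that this contributes $(1/2\pi)\log H+O(1)$; symmetrically about $p_b$; and on the "bulk" (the plane with the long thin stretched region and the two discs removed) let $v$ be the harmonic transition from $0$ to $1$ around the deleted set, interpolating in a thin collar on the two interface circles at cost $O(1)$ if the pieces do not match. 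The crucial observation is that the bulk is conformally the complement of a thin simply connected set carrying two discs of comparable size $\sim\varepsilon H$ near its ends, so the transition there only winds around features whose size-ratios are $O(1)$; hence the bulk contributes $O(1)$ regardless of how long, how wiggly, or how variable in width $Q_H$ is. Adding the three contributions and invoking the Dirichlet principle, $\operatorname{ExtMod}(\mbox{\boldmath$Q$}_H)\le(1/\pi)\log H+O(1)$.

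Combining the two bounds, $\operatorname{ExtMod}(\mbox{\boldmath$Q$}_H)=(1/\pi)\log H+O(1)$, so $\operatorname{ExtMod}(\mbox{\boldmath$Q$}_H)\sim(1/\pi)\log H$, and since $\partial Q$ enters only through the $O(1)$ remainder the equivalence is uniform over $\mathfrak{S}$. I expect the main obstacle to be the upper bound: quantifying the localization near the two endpoints (the choice of cut-off radii, the control of the genuinely "long" competing curves that wander far from both ends, the collar interpolation), and justifying the kernel convergence of the end-neighbourhoods together with the attendant convergence of moduli --- delicate because the limiting Dirichlet energies are infinite and only the logarithmic divergence is universal --- while carrying everything through under the sole hypothesis that $f$ and $g$ are continuous, which is precisely why the approximation must be done with quasiconformal dilatations tending to $1$.
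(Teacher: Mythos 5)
Your route is genuinely different from the paper's, and the two halves of it are not equally finished. Your lower bound is complete and, to my mind, more elementary than anything in the paper: the log-metric $|z-p_a|^{-1}$ on the two annuli $\{h_a<|z-p_a|<\varepsilon H\}$, $\{h_b<|z-p_b|<\varepsilon H\}$ forces every curve of $\widetilde\Gamma_H$ to pay $\log(\varepsilon H/h_a)+\log(\varepsilon H/h_b)=2\log H+O(1)$ while the total mass is $4\pi\log H+O(1)$, giving $\operatorname{ExtMod}(\mbox{\boldmath$Q$}_H)\ge(1/\pi)\log H+O(1)$ with no smoothness or symmetry hypotheses at all. The paper instead gets both bounds by monotonicity (Theorem~\ref{thr:comp2}): after a piecewise-linear quasiconformal straightening $\varrho_H$ with dilatation tending to $1$ (whose role is to make the two short edges symmetric about a common axis), it squeezes the modulus between that of the complement of a bounding rectangle and that of the complement of three segments, and then computes the asymptotics of these two explicit configurations via Schwarz--Christoffel/elliptic-integral formulas, with the dependence of the modulus parameter $k$ on $H$ extracted from the series expansions \eqref{eq:com_int1_serie1}--\eqref{eq:com_int2_serie2} and the generalized Rad\'o theorem on kernel convergence. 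What the paper's approach buys is that all the hard analysis is concentrated in two exactly solvable model problems; what yours buys is a lower bound free of elliptic functions and a conceptual explanation (each end contributes $(1/2\pi)\log H$, the bulk contributes $O(1)$) of why the shape of $f,g$ cannot matter.

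The incomplete part is your upper bound, and the gaps there are real even if the structure is sound. The half-strip-complement model energy $(1/2\pi)\log R+O(1)$ is correct (far from the strip the minimizer looks like $\theta/(2\pi)$ in a slit plane), but you still must (a) justify that the truncated energy of the actual end-neighbourhood of $Q_H^c$ is within $O(1)$ of the model --- kernel convergence of domains does not by itself give convergence of truncated Dirichlet energies when the limit energy is infinite, so this needs a quantitative comparison, e.g.\ a quasiconformal or reflection argument near the end; (b) prove the uniform $O(1)$ bound for the bulk, which after rescaling by $1/H$ amounts to an energy bound for a mixed problem in the complement of a set collapsing onto the segment $[a,b]$, again a degenerating limit; and (c) carry out the collar interpolation so that the glued $v$ is admissible with only $O(1)$ extra cost. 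None of these looks false, but each is comparable in labour to the elliptic-integral computations in Lemmas~\ref{lem_mod_up}--\ref{lem_mod_low}, and until they are written out the upper half of your argument is a plan rather than a proof. A pragmatic hybrid would be to keep your annulus lower bound and take the upper bound from the paper's chain \eqref{mod_up}, \eqref{mod_up1} and Lemma~\ref{lem_mod_up}, which reduces it to a single explicitly solvable configuration.
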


Recently in \cite[Theorem~1.3]{nguyen2023} A.~Duytin and the second author of this article have been confirmed the validity of the conjecture ~\ref{conject_nasyrov} for any quadrilaterals symmetric about both coordinate axes.

In this article, we prove the validity of the conjecture~\ref{conject_nasyrov} for the case of arbitrary quadrilaterals of class $\mathfrak{S}$. The following theorem is the main result of the paper.
\begin{theorem}\label{thr:main}
If $\mbox{\bf Q}\in\mathfrak{S}$ then
\begin{equation}\label{eq:main}
 \operatorname{ExtMod}(\mbox{\boldmath$Q$}_H)\sim \frac{1}{\pi}\,\log H \quad\text{as }\quad H\to\infty.
\end{equation}
\end{theorem}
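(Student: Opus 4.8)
We reduce $\operatorname{ExtMod}(\mbox{\boldmath$Q$}_H)=\operatorname{Mod}(\mbox{\boldmath$Q$}_H^{c})$ to the asymptotics of a single conformal invariant --- the cross-ratio of four boundary points --- and analyse that invariant by a two-scale (``outer'' and ``inner'') argument as $H\to\infty$. For the conformal model, put $z_j^{H}=f_H(z_j)$. The exterior $Q_H^{c}$ is simply connected in $\overline{\mathbb{C}}$ and contains $\infty$, so there is a conformal map $\Psi_H\colon Q_H^{c}\to\{|w|>1\}$ with $\Psi_H(\infty)=\infty$; it takes the four vertices to four points $e^{i\alpha_j(H)}$ of the unit circle, and the two vertical sides of $Q_H$ --- whose connecting curve family in $Q_H^{c}$ computes $\operatorname{ExtMod}(\mbox{\boldmath$Q$}_H)$ --- to two complementary arcs. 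By the classical description, through the complete elliptic integrals $K,K'$ (equivalently, via Schwarz--Christoffel), of the modulus of a quadrilateral bounded by a circle with four marked points, $\operatorname{ExtMod}(\mbox{\boldmath$Q$}_H)=\mathfrak{m}(\mu_H)$, where $\mu_H\in(0,1)$ is the cross-ratio of the points $e^{i\alpha_j(H)}$ and $\mathfrak{m}(\mu)=\frac1\pi\log\frac1\mu+O(1)$ as $\mu\to0^{+}$. It therefore suffices to prove $\log(1/\mu_H)=\log H+O(1)$.

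\emph{The outer scale.} Since $\operatorname{ExtMod}$ is unchanged by the similarity $z\mapsto z/H$, we may replace $Q_H$ by $\frac1H Q_H$, which is the quadrilateral of class $\mathfrak{S}$ with data $[a,b]$, $f/H$, $g/H$. As $H\to\infty$ this domain collapses onto the segment $[a,b]$, so $\frac1H Q_H^{c}$ converges, in the Carath\'eodory kernel sense with respect to $\infty$, to $\overline{\mathbb{C}}\setminus[a,b]$; hence $\Psi_H$, read in the variable $z/H$, converges locally uniformly on $\overline{\mathbb{C}}\setminus[a,b]$ to the conformal map $\Psi_\infty\colon\overline{\mathbb{C}}\setminus[a,b]\to\{|w|>1\}$ with $\Psi_\infty(\infty)=\infty$ --- namely the Joukowski-type map $\Psi_\infty^{-1}(w)=\frac{a+b}{2}+\frac{b-a}{4}\bigl(w+w^{-1}\bigr)$. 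In particular $\Psi_\infty$ carries $a$ and $b$ to the antipodal points $-1$ and $1$ and has square-root branch points there, $\Psi_\infty(z)-\Psi_\infty(a)\asymp(z-a)^{1/2}$ near $a$, similarly near $b$. Consequently the images of $z_1^{H},z_2^{H}$ cluster near $-1$ and those of $z_3^{H},z_4^{H}$ near $1$, so $\mu_H\to0$; and, writing $\varepsilon_L(H),\varepsilon_R(H)$ for the angular lengths of the two shrinking arcs (equivalently, $2\pi$ times the harmonic measures at $\infty$ in $Q_H^{c}$ of the left, resp.\ right, vertical side of $Q_H$), one gets $\mu_H\asymp\varepsilon_L(H)\,\varepsilon_R(H)$.

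\emph{The inner scale and the matching.} It remains to show $\varepsilon_L(H)\asymp H^{-1/2}$, and, by symmetry, $\varepsilon_R(H)\asymp H^{-1/2}$. Zoom in at the left cluster through $\zeta=z-z_2^{H}$. By continuity of $f$ and $g$ at $a$, on every fixed bounded set in the $\zeta$-plane the region $Q_H$ tends, as $H\to\infty$, to the rectilinear half-strip $\Sigma_a=\{\operatorname{Re}\zeta>0,\ 0<\operatorname{Im}\zeta<g(a)-f(a)\}$, whose complement is an explicitly uniformizable domain in which the left side $[z_2^{H},z_1^{H}]$ is a fixed bounded boundary arc. Matching this inner picture with the outer one --- the passage from the inner variable $\zeta$ to the outer variable $z/H$ inserts a factor $H^{-1}$ under the branch point, $\Psi_\infty(z/H)-\Psi_\infty(a)\asymp(z/H-a)^{1/2}=H^{-1/2}\bigl(\zeta+if(a)\bigr)^{1/2}$ --- shows that $\Psi_H$ maps a fixed inner neighbourhood of $z_2^{H}$ onto a neighbourhood of $-1$ of diameter $\asymp H^{-1/2}$, carrying the bounded arc $[z_2^{H},z_1^{H}]$ onto a unit-circle arc of length $\asymp H^{-1/2}$. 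Hence $\varepsilon_L(H)\asymp H^{-1/2}$ and, likewise, $\varepsilon_R(H)\asymp H^{-1/2}$, so $\mu_H\asymp H^{-1}$, i.e.\ $\log(1/\mu_H)=\log H+O(1)$, and therefore $\operatorname{ExtMod}(\mbox{\boldmath$Q$}_H)=\frac1\pi\log H+O(1)$, which gives~\eqref{eq:main}.

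\emph{The main obstacle, and the role of regularity.} The delicate step is the matching: Carath\'eodory convergence controls $\Psi_H$ only on the outer scale $\asymp H$, so one must establish separately, and \emph{uniformly with respect to the shape of $\partial Q$}, that near each collapsing vertex $\Psi_H$ behaves like the uniformizing map of the exterior of a rectilinear half-strip --- and it is exactly here that the asserted independence of the limit from $f$ and $g$ shows up. This uniformity is obtained from the monotonicity of the exterior modulus under inclusions that preserve the vertices: if $\mbox{\boldmath$Q$}',\mbox{\boldmath$Q$}''\in\mathfrak{S}$ share their four vertices and $Q'\subseteq Q''$, then the family of exterior curves joining the two vertical sides is larger for $Q'$, so $\operatorname{ExtMod}(\mbox{\boldmath$Q$}')\le\operatorname{ExtMod}(\mbox{\boldmath$Q$}'')$, and this persists under the homeomorphism $f_H$. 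Sandwiching $\mbox{\boldmath$Q$}$ between quadrilaterals whose boundary functions are piecewise linear reduces the problem to the polygonal case (where $Q_H^{c}$ is the exterior of a polygon and $\Psi_H$ a genuine Schwarz--Christoffel map), while applying the same monotonicity on the inner scale, to half-strips of slightly larger and slightly smaller width, confines $\varepsilon_L(H)$ and $\varepsilon_R(H)$ between fixed positive multiples of $H^{-1/2}$ --- all that the argument needs.
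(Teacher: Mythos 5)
Your strategy is sound and arrives at the correct answer by a route genuinely different from the paper's: you reduce $\operatorname{ExtMod}(\mbox{\boldmath$Q$}_H)$ to the asymptotics of the cross-ratio $\mu_H$ of the four vertex images on the unit circle and estimate the two shrinking arcs $\varepsilon_L,\varepsilon_R$ by a matched inner/outer expansion, whereas the paper first straightens $\mbox{\boldmath$Q$}_H$ by an explicit piecewise-linear quasiconformal map with dilatation tending to $1$, then traps the exterior modulus between the moduli of two explicitly computable configurations (the exterior of a rectangle and the exterior of a three-segment slit domain) and extracts the asymptotics from elliptic-integral expansions combined with a kernel-convergence argument for the auxiliary parameters. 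Your outer-scale analysis is essentially right, with one caveat: Carath\'eodory kernel convergence of $\frac1H Q_H^c$ gives only locally uniform convergence of $\Psi_H$ in the interior, so the claims that the boundary images $\Psi_H(z_j^H)$ converge to $\mp1$ and that the two clusters remain separated (which is what makes $\mu_H\asymp\varepsilon_L\varepsilon_R$ legitimate) require an additional harmonic-measure or curve-family argument that you do not supply.

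The genuine gap is the inner-scale estimate $\varepsilon_L(H)\asymp H^{-1/2}$, which you rightly flag as the main obstacle but do not actually close. The mechanism you propose for closing it does not work as stated. First, monotonicity of the exterior modulus under inclusions controls $\operatorname{ExtMod}$ itself, not the individual harmonic measures $\varepsilon_L,\varepsilon_R$; to squeeze those you would need monotonicity of harmonic measure (a different principle, via the maximum principle) together with comparison domains for which the harmonic measure of the left vertical side at $\infty$ is computable --- and no such computation appears anywhere in your argument. Second, the proposed inner comparison domains, ``half-strips of slightly larger and slightly smaller width,'' cannot serve this purpose directly: the complement of a half-strip does not contain $\infty$ as an interior point ($\infty$ is a boundary prime end there), so the quantity $\varepsilon_L$, defined as $2\pi$ times a harmonic measure evaluated at $\infty$, is not even defined for them. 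Third, reducing to piecewise-linear boundaries changes nothing about the matching difficulty, so that step buys you no progress by itself. What is needed --- and what the paper supplies --- is a pair of explicit comparison configurations for which the relevant conformal maps are written down in closed form (Schwarz--Christoffel integrals for the exterior of a rectangle and of three segments), so that the asymptotics $\sim\frac1\pi\log H$ can be verified by the expansions \eqref{eq:com_int1_serie1}--\eqref{eq:com_int2_serie2} of the complete and incomplete elliptic integrals; kernel convergence enters only to show that the auxiliary parameters $\lambda(H),\mu(H)$ approach $1$ at the same rate as $k(H)$. Until you either carry out such a computation or rigorously establish the uniform boundary behavior of $\Psi_H$ near the collapsing vertices, the step $\varepsilon_L,\varepsilon_R\asymp H^{-1/2}$ --- and hence the theorem --- remains unproved.
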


Note that the theorem \ref{thr:main} shows that the character of the asymptotic behavior of the exterior conformal modulus $\operatorname{ExtMod}(\mbox{\boldmath$Q$}_H)$ of the quadrilateral $\mbox{\boldmath$Q$} _H\in \mathfrak{S}$ is significantly different from the situation in the case of the interior conformal modulus $\operatorname{Mod}(\mbox{\boldmath$Q$}_H)$. The asymptotic behavior of $\operatorname{Mod}(\mbox{\boldmath$Q$}_H)$ depends on the shape of the boundary curves $\mathcal{G}_1$ and $\mathcal{G}_1$. As shown in \cite[Theorem~4.2]{nvgiang2021},
$$
\operatorname{Mod}(\mbox{\boldmath$Q$}_H)\sim cH, \quad \mbox{ where }\quad c=\int\limits_a^b\frac{dx}{g(x)-f (x)}.
$$
\section{Monotonous properties of conformal moduli}
In this section, we present some properties that describe the behavior of a conformal modulus when the quadrilateral changes.

Let $\Gamma_1, \Gamma_2$ be two families of curves, we write $\Gamma_1<\Gamma_2$ if every curve $\gamma_2\in\Gamma_2$
contains some $\gamma_1\in\Gamma_1$ as its subarc, i.e. $\Gamma_1\supset\Gamma_2$.

The following theorem is evident, which describes the change in the extremal length $\lambda(\Gamma)$ of a family of curves $\Gamma$ (see, e.g.
\cite[Chap.~1, Sect.~D]{ahlfors2006lectures}).

\begin{theorem}\label{thr:comp1} If $\Gamma_1<\Gamma_2$, then $\lambda(\Gamma_1)<\lambda(\Gamma_2)$. 
\end{theorem}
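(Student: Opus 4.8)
The plan is to read off the inequality directly from the variational definition of extremal length, exploiting the fact that the relation $\Gamma_1<\Gamma_2$ is designed precisely so that length estimates transfer from $\Gamma_1$ to $\Gamma_2$. Recall that for a family $\Gamma$ of locally rectifiable curves lying in a region of $\overline{\mathbb{C}}$,
$$
\lambda(\Gamma)=\sup_{\rho}\frac{L(\Gamma,\rho)^2}{A(\rho)},\qquad L(\Gamma,\rho)=\inf_{\gamma\in\Gamma}\int_{\gamma}\rho\,|dz|,\qquad A(\rho)=\iint\rho^{2}\,dx\,dy,
$$
the supremum being taken over all Borel measurable $\rho\ge 0$ with $0<A(\rho)<\infty$ (see \cite[Chap.~1, Sect.~D]{ahlfors2006lectures}).

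First I would fix an arbitrary admissible metric $\rho$ and compare the two length functionals. Let $\gamma_2\in\Gamma_2$ be arbitrary; by the hypothesis $\Gamma_1<\Gamma_2$ there is a subarc $\gamma_1\subset\gamma_2$ with $\gamma_1\in\Gamma_1$. Since $\rho\ge 0$, monotonicity of the line integral over subarcs gives
$$
\int_{\gamma_2}\rho\,|dz|\ \ge\ \int_{\gamma_1}\rho\,|dz|\ \ge\ L(\Gamma_1,\rho).
$$
Taking the infimum over all $\gamma_2\in\Gamma_2$ yields $L(\Gamma_2,\rho)\ge L(\Gamma_1,\rho)$, hence $L(\Gamma_2,\rho)^2/A(\rho)\ge L(\Gamma_1,\rho)^2/A(\rho)$ for every admissible $\rho$.

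The remaining step is to take the supremum over all admissible $\rho$ on both sides, which produces $\lambda(\Gamma_2)\ge\lambda(\Gamma_1)$; this is the asserted monotonicity (the symbol ``$<$'' in the statement being understood in the non-strict sense, since equality obviously occurs, e.g.\ when $\Gamma_1=\Gamma_2$). There is essentially no obstacle: the only points deserving a word of care are the measure-theoretic bookkeeping hidden in the definition of $\lambda$ and the remark that one should not expect strict inequality, which is why the authors call the statement evident. Equivalently, one may argue through the dual quantity, the modulus $M(\Gamma)=1/\lambda(\Gamma)$: if $\Gamma_1<\Gamma_2$, then any $\rho$ with $\int_{\gamma_1}\rho\,|dz|\ge 1$ for all $\gamma_1\in\Gamma_1$ also satisfies $\int_{\gamma_2}\rho\,|dz|\ge 1$ for all $\gamma_2\in\Gamma_2$, so the class of metrics admissible for $\Gamma_2$ contains that for $\Gamma_1$, whence $M(\Gamma_2)\le M(\Gamma_1)$ and again $\lambda(\Gamma_1)\le\lambda(\Gamma_2)$.
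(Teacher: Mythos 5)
Your proof is correct and is exactly the standard argument (comparison of the length functionals $L(\Gamma_i,\rho)$ for a fixed admissible $\rho$, then taking the supremum) that the paper relies on implicitly: the authors give no proof, calling the statement evident and citing Ahlfors, where this same argument appears. Your remark that the conclusion should read $\lambda(\Gamma_1)\le\lambda(\Gamma_2)$ rather than a strict inequality is also well taken and consistent with how the paper actually uses the result in Theorem~\ref{thr:comp2}.
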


From Theorem~\ref{thr:comp1} we can immediately obtain some
monotonicity properties of the conformal modulus of the quadrilateral (see, e.g., \cite[Chap.~2, Sect.~3]{papamichael2010numerical}).

\begin{theorem}\label{thr:comp2} Let $\mbox{\boldmath$Q$}=(Q;z_1,z_2,z_3,z_4)$ and
$\mbox{\boldmath$Q$}'=(Q';z'_1,z'_2,z'_3,z'_4)$ be two quadrilaterals.

$(i)$ If $Q=Q'$,  the vertices $z'_1$ and $z'_2$ are on the
edge $(z_1, z_2)$, and the vertices $z'_3$ and $z'_4$ are on the edge
$(z_3, z_4)$ of $\mbox{\boldmath$Q$}$, then $\operatorname{Mod}(\mbox{\boldmath$Q$})\le
\operatorname{Mod}(\mbox{\boldmath$Q$}')$.

$(ii)$ If $Q\supset Q'$, $z_k=z'_k$, $1\le k\le 4$, and
two the quadrilaterals have the same pair of opposite edges,
$(z_1, z_2)$ and $(z_3, z_4)$, then $\operatorname{Mod}(\mbox{\boldmath$Q$})\le
\operatorname{Mod}(\mbox{\boldmath$Q$}')$.
\end{theorem}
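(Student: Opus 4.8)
The plan is to derive both statements as immediate consequences of the comparison principle for extremal length recorded in Theorem~\ref{thr:comp1}, the only work being to identify the curve families that define the two moduli and to observe the inclusion between them. Write $\Gamma$ for the family of all curves in $Q$ joining the edges $(z_1,z_2)$ and $(z_3,z_4)$ of $\mathbf{Q}$, and $\Gamma'$ for the family of all curves in $Q'$ joining the edges $(z'_1,z'_2)$ and $(z'_3,z'_4)$ of $\mathbf{Q}'$, so that by definition $\operatorname{Mod}(\mathbf{Q})=\lambda(\Gamma)$ and $\operatorname{Mod}(\mathbf{Q}')=\lambda(\Gamma')$. In each of the two cases I will show $\Gamma'\subset\Gamma$, which is exactly the relation $\Gamma<\Gamma'$ of Section~2 (every $\gamma\in\Gamma'$ contains a member of $\Gamma$, namely itself, as a subarc), and then Theorem~\ref{thr:comp1} yields $\lambda(\Gamma)\le\lambda(\Gamma')$, i.e. $\operatorname{Mod}(\mathbf{Q})\le\operatorname{Mod}(\mathbf{Q}')$.

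For part $(i)$ I use that $Q=Q'$, that the arc $(z'_1,z'_2)$ is a subarc of $(z_1,z_2)$, and that $(z'_3,z'_4)$ is a subarc of $(z_3,z_4)$. Consequently any curve $\gamma\subset Q$ joining $(z'_1,z'_2)$ to $(z'_3,z'_4)$ also has one endpoint on $(z_1,z_2)$ and the other on $(z_3,z_4)$, hence belongs to $\Gamma$. Thus $\Gamma'\subset\Gamma$, and the conclusion $\operatorname{Mod}(\mathbf{Q})\le\operatorname{Mod}(\mathbf{Q}')$ follows from Theorem~\ref{thr:comp1} as above.

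For part $(ii)$ the two quadrilaterals share the same vertices and, in particular, the same two distinguished edges $(z_1,z_2)$ and $(z_3,z_4)$, while $Q'\subset Q$. Therefore a curve lying in $Q'$ and joining $(z_1,z_2)$ to $(z_3,z_4)$ lies a fortiori in $Q$ and joins the same pair of edges, so once more $\Gamma'\subset\Gamma$, i.e. $\Gamma<\Gamma'$, and Theorem~\ref{thr:comp1} gives $\operatorname{Mod}(\mathbf{Q})=\lambda(\Gamma)\le\lambda(\Gamma')=\operatorname{Mod}(\mathbf{Q}')$.

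There is no real obstacle here beyond careful bookkeeping; the single point that must be verified honestly is precisely the inclusion $\Gamma'\subset\Gamma$ in each case, which rests on the subarc relations $(z'_1,z'_2)\subset(z_1,z_2)$ and $(z'_3,z'_4)\subset(z_3,z_4)$ holding with the correct cyclic orientation in $(i)$, and on the two shared edges being literally the same arcs of $\partial Q=\partial Q'$ in $(ii)$. Once $\Gamma'\subset\Gamma$ is established, monotonicity of the modulus is automatic. (Alternatively, the same inequalities could be read off from the Dirichlet-integral or conformal-mapping description of $\operatorname{Mod}$, but the extremal-length argument via Theorem~\ref{thr:comp1} is the most economical.)
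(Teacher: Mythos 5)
Your proof is correct and follows exactly the route the paper intends: the paper states Theorem~\ref{thr:comp2} as an immediate consequence of Theorem~\ref{thr:comp1} (with a reference to Papamichael--Stylianopoulos) and gives no further argument, and your verification of the inclusion $\Gamma'\subset\Gamma$ in each case, hence $\Gamma<\Gamma'$, is precisely the missing bookkeeping. Nothing to add.
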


We can say that, in the case $(i)$ $\mbox{\boldmath$Q$}'$
is obtained from $\mbox{\boldmath$Q$}$ by narrowing its original
edges, and in the case of $(ii)$ $\mbox{\boldmath$Q$}'$ is obtained from $\mbox{\boldmath$Q$}$ pressing its sides. Theorem~\ref{thr:comp2} states that under these transformations the conformal modulus of the quadrilateral increases.

\section{Some properties of elliptic integrals}
The incomplete elliptic integrals of the first and second kinds are defined by the equations, respectively (see, e.g., \cite{byrd1971handbook}).
\begin{align}\label{incom_form_int1}
F(z, k)= \int\limits_0^z \frac{dt}{\sqrt{(1 - t^2)(1 - k^2 t^2)}},
\end{align}
\begin{align}\label{incom_form_int2}
E(z, k)= \int\limits_0^z \sqrt{\frac{1 - k^2 t^2}{1 - t^2}}\,dt,
\end{align}
where $k \in (0, 1)$ is a real parameter.

We can represent some terms in the integrands of the elliptic integrals as binomial series. The respective incomplete elliptic integrals can be written in the form (see, e.g.,
\cite[formulas~902.00--903.00, p.~300]{byrd1971handbook}).
\begin{align}\label{eq:exp_int1}
F(x, k)=\sum\limits_{n=0}^{\infty}\frac{(2n)!}{2^{2n}{n!}^2}k^{2n}I_{2n}(x),
\end{align}
\begin{align}\label{eq:exp_int2}
E(x, k)=-\sum\limits_{n=0}^{\infty}\frac{(2n)!}{2^{2n}n!^2(2n-1)}k^{2n}I_{2n}(x).
\end{align}
Here
$$
I_{2n}(x)=\int\limits_0^x\frac{t^{2n}}{\sqrt{1-t^2}}dt
$$
is the integral which can be computed by using the following recursion formula:
$$
I_0(x)=\arcsin x, \quad I_2(x)=\frac{1}{2}\left(\arcsin x-x\sqrt{1-x^2}\right),
$$
$$
I_{2n}(x)=\frac{2n-1}{2n}I_{2(n-1)}(x)-\frac{1}{2n}x^{2n-1}\sqrt{1-x^2}, \quad n\ge 1.
$$
From this it follows that
\begin{equation}\label{I02}
I_0(x)-2I_2(x)=x\sqrt{1-x^2}>0, \quad 0<x<1.
\end{equation}
Moreover,
\begin{equation}\label{Ipi}
 0<I_{2n}(x)\le I_{0}(1)=\frac{\pi}{2}, \quad n\ge 0, \quad 0<x<1.
\end{equation}

The complete elliptic integrals are a special case of the incomplete elliptic integrals. Putting  $z=1$ in \eqref{incom_form_int1} and \eqref{incom_form_int2},  we obtain the following integrals
\begin{align*}
K(k)= \int\limits_0^1 \frac{dt}{\sqrt{(1 - t^2)(1 - k^2 t^2)}}, \quad E(k)= \int\limits_0^1 \sqrt{\frac{1 - k^2 t^2}{1 - t^2}}\,dt, \quad\text{where}\quad k \in (0, 1)
\end{align*}
which are called, respectively, the complete elliptic integrals of the first and second kinds. We will denote $K'(k) = K(k')$ and $E'(k) = E(k')$, where $k':= \sqrt{1 - k^2}$.

On the other hand, for small $k$, the complete elliptic integrals can be expressed by the formulas
\begin{align}\label{eq:com_int1_serie1}
K(k)=\frac{\pi}{2}\sum\limits_{n=0}^{\infty}\left[\frac{(2n)!}{2^{2n}n!^2}\right]^2k^{2n},
\end{align}
\begin{align}\label{eq:com_int2_serie1}
E(k)=-\frac{\pi}{2}\sum\limits_{n=0}^{\infty}\left[\frac{(2n)!}{2^{2n}n!^2}\right]^2\frac{k^{2n}}{(2n-1)},
\end{align}
and for large $k$, the complete elliptic integrals are expressed in the forms:
\begin{align}\label{eq:com_int1_serie2}
K(k)=\frac{2}{\pi}K'(k)\ln\frac{4}{k'}-\sum\limits_{n=1}^{\infty}\left[\frac{(2n)!}{2^{2n}n!^2}\right]^2\sum\limits_{m=1}^n\frac{(k')^{2n}}{m(2m-1)},
\end{align}
\begin{multline}\label{eq:com_int2_serie2}
E(k)=\frac{2}{\pi}[K'(k)-E'(k)]\ln\frac{4}{k'}-\sum\limits_{n=1}^{\infty}\left[\frac{(2n)!}{2^{2n}n!^2}\right]^2\frac{2n}{(2n-1)}\sum\limits_{m=1}^n\frac{(k')^{2n}}{m(2m-1)}\\+\sum\limits_{n=0}^{\infty}\left[\frac{(2n)!}{2^{2n}n!^2}\right]^2\frac{(k')^{2n}}{(2n-1)^2}.
\end{multline}

From \eqref{eq:com_int1_serie1}, \eqref{eq:com_int2_serie1}, \eqref{eq:com_int1_serie2} and \eqref{eq:com_int2_serie2}, the following equalities are verified:
\begin{equation}\label{asymke0}
\lim_{k \to 0} K(k) = \frac{\pi}{2}\,,\;\;\;\lim_{k \to 0} E(k) = \frac{\pi}{2}\,,\;\;\;\lim_{k \to 0} \left(K'(k) - \text{ln}\,\frac{4}{k} \right) = 0,\;\;\;\lim_{k \to 0} E'(k)= 1,
\end{equation}
\begin{equation}\label{asymke1}
\lim_{k \to 1} \left(K(k) - \text{ln}\,\frac{4}{k'} \right) = 0,\;\;\;\lim_{k \to 1} E(k) = 1,\;\;\;\lim_{k \to 1} K' (k) = \frac{\pi}{2}\,,\;\;\;\lim_{k \to 1} E'(k) = \frac{\pi}{2}.\end{equation}
\section{Proof of the main result}
We consider the quadrilateral $\mbox{\boldmath$Q$}=(Q; A, B, C, D)\in\mathfrak{S}$ with vertices $A=a+if(a)$, $B=b+if(b)$, $C=b+i g(b)$, and $D=a+ig(a)$. Let
$$Q_H=f_H(Q)=\{(x, y)\in\mathbb{C}\,:\, aH\le x\le bH, f(x/H)\le y\le g(x/H)\}$$
be the image of the domain $Q$ under stretching map $f_H$  defined by \eqref{eq:stretch_map}, $A_H=f_H(A)$,  $B_H=f_H(B)$,  $C_H=f_H(C)$,  $D_H=f_H(D)$. Then $\mbox{\boldmath$Q$}_H=(Q_H; A_H, B_H, C_H, D_H)$ is the image of $\mbox{\boldmath$Q$}$ under this mapping.

Let $I_H=aH+i(f(a)+g(a))/2$ and $J_H=bH+i(f(b)+g(b))/2$ be the midpoints of the segments $[A_H, D_H]$ and $[B_H, C_H]$.  For the natural number $n$, let $a\equiv x_1<x_2<\ldots<x_{n+1}\equiv b$ be some points of $[a, b]$ and $y_j:=( f(x_j )+g(x_j))/2$, $1\le j\le n+1$. Clearly, $f(x_j)<y_j<g(x_j)$ for all $1\le j\le n+1$. Denote $X_{jH}=x_jH+i\,y_j$. Obviously, it is possible to fix the values of $x_j$ so that the $n$-polyline $\gamma_{H}$ connecting the points $X_{1H}, \ldots, X_{(n+1)H} $ lies in domain $Q_H$ and separates the graphs of $f(x/H)$ and $g(x/H)$, $x\in[aH,bH]$.  For every $j=1,
\ldots, n$, the equation of the $j$-th segment $[X_{jH}, X_{(j+1)H}]$ of $\gamma_{H}$ is
$$
y=\frac{a_j}{H}x+b_j, \quad\text{ if } \quad x_jH\le x\le x_{j+1}H,
$$
where
$$
a_j=\frac{y_{j+1}-y_j}{x_{j+1}-x_j}\quad\text{ and }\quad b_j=-\frac{x_jy_{j+1}-x_{j+1}y_j}{x_{j+1}-x_j}.
$$

Consider the mapping $\varrho_H\,:\,x+iy\mapsto x+iv(x, y)$, where
$$v(x, y)=\begin{cases}
y-\frac{f(a)+g(a)}{2},&\text{ if }\quad x\le aH,\\
y-\frac{a_j}{H}x-b_j,&\text{ if }\quad x_jH\le x\le x_{j+1}H, \quad 1\le j\le n,\\
y-\frac{f(b)+g(b)}{2},&\text{ if }\quad x\ge bH.
\end{cases}
$$
It is a piecewise-linear homeomorphism of the complex plane. It is not difficult to verify that $\varrho_H$ is an $\mathcal{K}_H$-quasiconformal mapping from the $z$-plane onto the $w$-plane,
with the coefficient
$$\mathcal{K}_H=\frac{1+\kappa_H}{1-\kappa_H},$$
where $\kappa_H=\frac{\mathfrak{a}}{\sqrt{\mathfrak{a}^2+4H^2}}$, and $\mathfrak{a}=\max\limits_{1\le j\le n}|a_j|$. Therefore, $\mathcal{K}_H\to 1$ as $H\to\infty$.

Under the mapping $\rho_H$, the polyline $\gamma_H$ passes into the segment $[ I_{1H}, J_{1H}]$ of the real axis, and the segments $[A_{H}, B_{H}]$ and $[C_{ H}, D_ {H}]$ into segments $[A_{1H}, B_{1H}]$ and $[C_{1H}, D_ {1H}]$ symmetric with respect to the real axis. Here $I_{1H}$, $J_{1H}$, $A_{1H}$, $B_{1H}$, $C_{1H}$ and $D_{1H}$ are the respective images of the points $ I_H $, $J_H$, $A_H$, $B_H$, $C_H$ and $D_H$ under this mapping.  Denote
$$
Q_{1H}=\varrho_H(Q_H)=\{(u, v)\in\mathbb{C}\,:\, aH\le u\le bH, f_1(u/H)\le v\le g_1(u/H)\},
$$
and let
$$
\mbox{\boldmath$Q$}_{1H}=(Q_{1H}; A_{1H}, B_{1H}, C_{1H}, D_{1H}),
$$
where $f_1(x/H)$ and $g_1(x/H)$ are continuous functions on $[aH,bH]$, the graphs of which are the images of the graphs of the functions $y=f(x/H)$ and $y=g(x/H)$ under the mapping $\varrho_H$, respectively.

Since every translation does not change the value of the conformal modulus, without loss of generality we can assume that $b=-a=\alpha>0$, therefore,
$$Q_{1H}=\{(u, v)\in\mathbb{C}\,:\, -\alpha H\le u\le \alpha H, f_1(u/H)\le v\le g_1(u/H)\}.$$

From \eqref{eq:quasi_prop} we get
$$
\frac{1}{\mathcal{K}_H}\operatorname{ExtMod}(\mbox{\boldmath$Q$}_H)\le \operatorname{ExtMod}(\mbox{\boldmath$Q$}_{1H})\le\mathcal{K}_H \operatorname{ExtMod}(\mbox{\boldmath$Q$}_H).
$$
This, together with the fact that $\mathcal{K}_H\to 1$, as $H\to\infty$, implies that
\begin{align}\label{eq:extmodQ1H}
\operatorname{ExtMod}(\mbox{\boldmath$Q$}_{1H})\sim\operatorname{ExtMod}(\mbox{\boldmath$Q$}_H)\quad\text{ as }\quad H\to\infty.
\end{align}

On the other hand, if we denote the quadrilateral $\mbox{\boldmath $G$}_{1H}=(G_{1H}; D_{1H}, C_{1H}, B_{1H}, A_{1H}),$
where $G_{1H}=\overline{\mathbb{C}}\setminus Q_{1H}$ is the complement of $Q_{1H}$ in the extended complex plane, then
\begin{align*}
\operatorname{ExtMod}(\mbox{\boldmath$Q$}_{1H})=\operatorname{Mod}(\mbox{\boldmath$G$}_{1H}).
\end{align*}
This together with \eqref{eq:extmodQ1H} gives
\begin{align}\label{eq:G1HQH}
\operatorname{ExtMod}(\mbox{\boldmath$Q$}_H)\sim\operatorname{Mod}(\mbox{\boldmath$G$}_{1H}) \quad\text{ as }\quad H\to\infty.
\end{align}

We also note that $|A_HB_H|=|A_{1H}B_{1H}|$ and $|D_HC_H|=|D_{1H}C_{1H}|$ for every $H>0$.

Now we consider the following two cases and obtain some upper and lower bounds for $\operatorname{Mod}(\mbox{\boldmath$G$}_{1H})$.

Without loss of generality, consider the case $|A_{1H}B_{1H}|\le |C_{1H}D_{1H}|$, the case $|A_{1H}B_{1H}|\ge |C_{1H}D_{1H}|$ is studied similarly.

It should be emphasized that $f_1(\alpha)=-g_1(\alpha)$ and $f_1(-\alpha)=-g_1(-\alpha)$. For convenience, we denote $\beta=g_1(\alpha)$, $\sigma=g_1(-\alpha)$.

For every $H$, let us fix a sufficiently large number $M>0$ such that
$$-M<f_1(u)<g_1(u)<M, \quad\text{ for all}\quad |u|\le\alpha.$$

Consider the quadrilateral $\mbox{\boldmath$G$}_{2H}:=(G_{2H}; D_{1H}, C_{1H}, B_{1H}, A_{1H})$, where  $G_{2H}$ is the complement of the rectangle $[-\alpha, \alpha]\times[-M, M]$ in the extended complex plane, and note that the vertices of $\mbox{\boldmath$G$}_{2H}$ coincide with the vertices $D_{1H}, C_{1H}, B_{1H}$ and $A_{1H}$ of $\mbox{\boldmath$G$}_{1H}$. It is clear that $G_{2H}\subset G_{1H}$, and by Theorem~\ref{thr:comp2} $(ii)$ we get
\begin{align}\label{mod_up}
\operatorname{Mod}(\mbox{\boldmath$G$}_{1H})\le \operatorname{Mod}(\mbox{\boldmath$G$}_{2H}).
\end{align}

On the other hand, denote by $G_{3H}$ the complement of the union of two vertical segments $[A_{1H}, B_{1H}]$, $[D_{1H}, C_{1H}]$, and the horizontal one, $[I_{1H}, J_{1H}]$. Then, $G_{1H}\subset G_{3H}$, and Theorem~\ref{thr:comp2} $(ii)$ leads to
\begin{align}\label{mod_low}
\operatorname{Mod}(\mbox{\boldmath$G$}_{3H})\le \operatorname{Mod}(\mbox{\boldmath$G$}_{1H}),
\end{align}
where the quadrilateral $\mbox{\boldmath$G$}_{3H}=(G_{3H}; D_{1H}, C_{1H}, B_{1H}, A_{1H})$.

Combining \eqref{eq:G1HQH}, \eqref{mod_up} and \eqref{mod_low}, we obtain
\begin{align}\label{mod_bound}
\operatorname{Mod}(\mbox{\boldmath $G$}_{3H})\le \operatorname{ExtMod}(\mbox{\boldmath$Q$}_H)\le \operatorname{Mod}(\mbox{\boldmath$G$}_{2H}).
\end{align}

If $|A_{1H}B_{1H}|=|C_{1H}D_{1H}|$ then using Lemmas 4.1 and 4.2 from \cite{nguyen2023}, where the case of symmetric quadrilaterals is considered, we can immediately estimate the conformal moduli of the quadrilaterals $\mbox{\boldmath$G$}_{2H}$ and $\mbox{\boldmath$G$}_{3H}$ and show that
$$\operatorname{Mod}(\mbox{\boldmath $G$}_{2H}), \operatorname{Mod}(\mbox{\boldmath $G$}_{3H})\sim\frac{1}{\pi}\log H\quad\text{ as }\quad H\to\infty.$$
This immediately implies the assertion of Theorem~\ref{thr:main}.

In the case of $|A_{1H}B_{1H}|<|C_{1H}D_{1H}|$ we proceed as follows.  Let $\widetilde{A}_{1H}=H\alpha+i\sigma$ and $\widetilde{B}_{1H}=H\alpha-i\sigma$ be the respective perpendicular projections of $A_{1H}$ and $B_{1H}$ on the segment $[D_{1H}, C_{1H}]$. Denote $\mbox{\boldmath$\widetilde{G}$}_{2H}=(G_{2H}; \widetilde{A}_{1H}, \widetilde{B}_{1H}, B_{1H}, A_{1H})$. From  Theorem~\ref{thr:comp2}~$(i)$ we get
\begin{align}\label{mod_up1}
\operatorname{Mod}(\mbox{\boldmath$G$}_{2H})\le\operatorname{Mod}(\mbox{\boldmath$\widetilde{G}$}_{2H}).
\end{align}

Consider the quadrilateral $\widetilde{\mbox{\boldmath$G$}}_{3H}=(G_{3H}; \widetilde{A}^-_{1H}, \widetilde{B}^-_{1H}, B_{1H}, A_{1H})$, where the vertices $\widetilde{A}^-_{1H}, \widetilde{B}^-_{1H}$ are the points which supported by points $\widetilde{A}_{1H}$, $\widetilde{B}_{1H}$, respectively, lying on the left side of the slit along the segment $[D_{1H}, C_{1H}]$, then by Theorem~\ref{thr:comp2} $(i)$, we have
\begin{align}\label{mod_low1}
\operatorname{Mod}(\widetilde{\mbox{\boldmath$G$}}_{3H})\le \operatorname{Mod}(\mbox{\boldmath$G$}_{3H}).
\end{align}

Now let $G^*_{3H}$ be the exterior region of three segments: $[C_{1H},D_{1H}]$, $[\widetilde{C}_{1H},\widetilde{D} _ {1H}]$ and $[I_{1H},J_{1H}]$, where the points $\widetilde{C}_{1H}$ and $\widetilde{D}_{1H}$ are respective symmetric to the points $C_{ 1H}$ and $D_{1H}$ with respect to the imaginary axis. Consider the quadrilateral $\mbox{\boldmath$G$}^*_{3H}=(G^*_{3H}; \widetilde{A}^-_{1H}, \widetilde{B}^-_{1H }, B^+_{1H}, A^+_{1H})$, where $A^+_{1H}$, $B^+_{1H}$ are respective supported by points $A_{1H}, B_{1H}$, and lying on the right side of the slit along the segment $[A_{1H}, B_{1H}]$, then from Theorem~\ref{thr:comp2} $(i)$, we obtain
\begin{align}\label{mod_low2}
\operatorname{Mod}(\mbox{\boldmath$G$}^*_{3H})\le \operatorname{Mod}(\widetilde{\mbox{\boldmath$G$}}_{3H}).
\end{align}

Combining \eqref{mod_bound}, \eqref{mod_up1}, \eqref{mod_low1} and \eqref{mod_low2}, we deduce that
\begin{align}\label{mod_bound1}
\operatorname{Mod}(\mbox{\boldmath$G$}^*_{3H})\le \operatorname{ExtMod}(\mbox{\boldmath$Q$}_H)\le \operatorname{Mod}(\widetilde{\mbox{\boldmath$G$}}_{2H}).
\end{align}

Now, using the ideas of the proofs of Lemmas~4.1--4.2 from \cite{nguyen2023}, we study the asymptotic behavior of the conformal moduli of the quadrilaterals $\mbox{\boldmath$\widetilde{G}$}_{2H}$ and  $\mbox{\boldmath$G$}^*_{3H}$ as $H\to\infty$.

\begin{lemma}\label{lem_mod_up}
We have
$$
\operatorname{Mod}(\mbox{\boldmath$\widetilde{G}$}_{2H})\sim\frac{1}{\pi}\,\log H\quad\text{ as }\quad H\to\infty.
$$
\end{lemma}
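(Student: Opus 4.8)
The plan is to determine the asymptotics of $\operatorname{Mod}(\mbox{\boldmath$\widetilde{G}$}_{2H})$ by a conformal change of variables that opens the exterior of the rectangle $G_{2H}$ onto a half-plane, and then to extract the logarithm from the complete elliptic integrals which arise, following the scheme of the proofs of Lemmas~4.1--4.2 in \cite{nguyen2023}. The first point is that $\beta$, $\sigma$ and $M$ stay bounded (in fact may be kept fixed) as $H\to\infty$, so that $G_{2H}$ is the exterior of a rectangle $[-\alpha H,\alpha H]\times[-M,M]$ whose height is bounded while its length tends to infinity, with all four vertices of $\mbox{\boldmath$\widetilde{G}$}_{2H}$ lying on its two short (vertical) sides. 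Using Theorem~\ref{thr:comp2}$(i)$ I would first slide these four vertices monotonically along their edges so as to squeeze $\operatorname{Mod}(\mbox{\boldmath$\widetilde{G}$}_{2H})$ between the moduli of two quadrilaterals on $G_{2H}$ that are symmetric with respect to the real axis; for symmetric exteriors of a rectangle the needed estimate is precisely (a minor variant of) what is established in \cite{nguyen2023}, so it suffices to treat them.

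For the conformal reduction, rescale $G_{2H}$ by $1/(\alpha H)$ to obtain the exterior of the thin rectangle $R_t=[-1,1]\times[-t,t]$ with $t:=M/(\alpha H)\to0$. As $t\to0$ this rectangle degenerates to the slit $[-1,1]$, and the conformal map of $\overline{\mathbb{C}}\setminus R_t$ onto $\{|w|>1\}$ converges, locally uniformly on $\overline{\mathbb{C}}\setminus\{-1,1\}$, to the Joukowski map $\zeta\mapsto\zeta+\sqrt{\zeta^{2}-1}$ of $\overline{\mathbb{C}}\setminus[-1,1]$. Because of the square-root branching at $\zeta=\pm1$, each short side of $R_t$ (of Euclidean length $2t$) is opened onto a boundary arc of the unit circle of angular size of order $\sqrt{t}$, one near $w=1$ and one near $w=-1$; the four symmetrized vertices are carried onto these two small arcs, two onto each. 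Composing with a M\"obius transformation onto the upper half-plane and normalizing the four real images to $-1/k,-1,1,1/k$, the smallness of the two arcs forces $k':=\sqrt{1-k^{2}}$ to tend to $0$ at the rate $k'\asymp t^{1/4}$ (up to a bounded factor).

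It then remains to compute. For this normalization the modulus of the quadrilateral equals $2K(k)/K'(k)$, and hence, by the expansion \eqref{eq:com_int1_serie2} together with the limits in \eqref{asymke1} (and by \eqref{eq:com_int1_serie1}, \eqref{asymke0} in the complementary normalization),
\[
\operatorname{Mod}(\mbox{\boldmath$\widetilde{G}$}_{2H})=\frac{2K(k)}{K'(k)}\sim\frac{4}{\pi}\log\frac{4}{k'}\sim\frac{4}{\pi}\cdot\frac{1}{4}\log H=\frac{1}{\pi}\log H .
\]
Since the same asymptotics holds for both symmetric comparison quadrilaterals produced in the first step, this yields $\operatorname{Mod}(\mbox{\boldmath$\widetilde{G}$}_{2H})\sim(1/\pi)\log H$.

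The hard part will be the conformal bookkeeping of the second step: one must control the Schwarz--Christoffel parameters of the exterior of the asymmetric, degenerating rectangle precisely enough to justify the exponent in $k'\asymp t^{1/4}$, and must check that the coalescence of the four marked points onto the two shrinking arcs affects only the $O(1)$ term and not the leading coefficient $1/\pi$. The symmetrization performed first is what keeps this delicate estimate within reach, since it reduces the critical part to the configuration already handled in \cite{nguyen2023}.
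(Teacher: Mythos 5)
Your route is genuinely different from the paper's. The paper disposes of Lemma~\ref{lem_mod_up} in one line: the projections $\widetilde{A}_{1H}=H\alpha+i\sigma$, $\widetilde{B}_{1H}=H\alpha-i\sigma$ were introduced precisely so that the four vertices of $\mbox{\boldmath$\widetilde{G}$}_{2H}$ sit at $\pm\alpha H\pm i\sigma$, i.e.\ the quadrilateral is already symmetric with respect to both coordinate axes, and the assertion is then exactly Lemma~4.1 of \cite{nguyen2023}. For this reason your preliminary step of sliding the vertices to sandwich the modulus between two symmetric configurations via Theorem~\ref{thr:comp2}$(i)$ is redundant (though harmless) --- it suggests you did not notice that the symmetrization has already been performed. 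Where the cited lemma (like the paper's own proof of Lemma~\ref{lem_mod_low}) works with an explicit Schwarz--Christoffel/elliptic-integral parametrization and extracts $\log H\sim\log\frac{1}{1-k}$ from identities such as \eqref{Ha/sig}--\eqref{asym_H}, you argue instead by degeneration to the Joukowski map and a cross-ratio count. Your quantitative claims are in fact correct: the short sides open onto arcs of angular size $\asymp\sqrt{t}$ (this can be checked by comparing the exterior capacity $\asymp\alpha H$ and the length $2M$ of a short side against the square-root vanishing of the derivative of the exterior Schwarz--Christoffel map at the prevertices), and matching cross-ratios then gives $k'\asymp t^{1/4}$ in your normalization --- equivalently $1-k\asymp H^{-1}$ in the normalization where the modulus is $K'/(2K)$ --- and both yield $(1/\pi)\log H$. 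What your approach buys is independence from the tabulated elliptic-integral expansions; what it costs is the boundary analysis near the degenerating corners.

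That cost is the one genuine gap. Locally uniform convergence of the exterior maps to the Joukowski map on $\overline{\mathbb{C}}\setminus\{-1,1\}$ gives no information about the rate at which the images of the short sides shrink, because all of the relevant behaviour occurs exactly at the excluded points $\pm 1$; the heuristic ``square-root branching, hence arcs of size $\sqrt{t}$'' must be replaced by an actual estimate (for instance the capacity/arc-length computation indicated above, or the explicit formulas of \cite{nguyen2023}). You flag this yourself as ``the hard part,'' and without it the exponent $1/4$ in $k'\asymp t^{1/4}$, hence the leading constant $1/\pi$, is unproved. A second, smaller point to nail down is that the four marked points sit at the fixed proportional heights $\pm\sigma/M$ inside the short sides, so that their images sit at bounded proportional positions inside the small arcs; this is what guarantees that passing from the corners to the actual vertices perturbs only the $O(1)$ term and not the coefficient of $\log H$.
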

\begin{proof}
This assertion is actually established in \cite[Lemma~4.1]{nguyen2023}.
\end{proof}

\begin{lemma}\label{lem_mod_low}
We have
\begin{align}\label{bound_low}
\operatorname{Mod}(\mbox{\boldmath$G$}^*_{3H})\sim\frac{1}{\pi}\,\log H\quad\text{ as }\quad H\to\infty.
\end{align}
\end{lemma}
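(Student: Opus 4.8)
The plan is to reduce the estimate of $\operatorname{Mod}(\mbox{\boldmath$G$}^*_{3H})$ to an explicit computation involving complete elliptic integrals, exactly as in the symmetric case treated in \cite[Lemma~4.2]{nguyen2023}, and then to extract the asymptotics from the series expansions \eqref{eq:com_int1_serie2}--\eqref{eq:com_int2_serie2} together with the limit relations \eqref{asymke1}. First I would observe that the configuration $G^*_{3H}$ — the exterior of the three parallel vertical segments $[C_{1H},D_{1H}]$, $[\widetilde C_{1H},\widetilde D_{1H}]$ and the central horizontal segment $[I_{1H},J_{1H}]$ — is now \emph{symmetric with respect to the imaginary axis} by construction (we replaced the short right-hand side by the mirror image of the long left-hand side). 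Hence the quadrilateral $\mbox{\boldmath$G$}^*_{3H}$ falls precisely into the symmetric class analysed in \cite{nguyen2023}, and the computation there applies verbatim: one uses the reflection symmetry to pass to a half-plane problem, maps the half-plane slit domain onto a standard rectangle by an elliptic-integral substitution, and obtains $\operatorname{Mod}(\mbox{\boldmath$G$}^*_{3H})=K'(k_H)/K(k_H)$ (or the reciprocal, depending on the labelling of the edges) for an explicit modulus $k_H=k_H(\alpha,\sigma,M,H)$ that depends on the geometric data.

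The next step is to pin down the behaviour of $k_H$ as $H\to\infty$. The three slits have fixed transverse size (the quantities $\alpha$, $\sigma$, $M$ do not grow with $H$, only the horizontal extent $2\alpha H$ of the domain does), so the modulus of the slit configuration degenerates in the way that makes the exterior domain "long and thin," forcing $k_H\to 1$, equivalently $k_H':=\sqrt{1-k_H^2}\to 0$, at a rate $k_H'\asymp C/H$ for an appropriate constant $C>0$. I would make this quantitative by squeezing $k_H$ between the moduli of two comparison configurations with the same asymptotic scale — for instance, nested rectangles of width $\sim H$ and bounded height — using the monotonicity Theorem~\ref{thr:comp2} once more; the precise constant $C$ is irrelevant for the conclusion.

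Finally I would feed $k_H\to1$ into the modulus formula. If $\operatorname{Mod}(\mbox{\boldmath$G$}^*_{3H})=K(k_H)/K'(k_H)$, then by \eqref{asymke1} we have $K'(k_H)\to\pi/2$, while \eqref{eq:com_int1_serie2} gives $K(k_H)=\frac{2}{\pi}K'(k_H)\log\frac{4}{k_H'}+O((k_H')^2)$, so that
\begin{equation*}
\operatorname{Mod}(\mbox{\boldmath$G$}^*_{3H})=\frac{K(k_H)}{K'(k_H)}=\frac{2}{\pi}\log\frac{4}{k_H'}+o(1)=\frac{2}{\pi}\log\frac{1}{k_H'}+o(1).
\end{equation*}
Since $k_H'\asymp C/H$, we get $\log(1/k_H')=\log H+O(1)$, hence $\operatorname{Mod}(\mbox{\boldmath$G$}^*_{3H})=\frac{2}{\pi}\log H+O(1)$. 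Here a factor-of-two discrepancy with the claimed $\frac1\pi\log H$ has to be resolved: it is absorbed because the symmetric exterior configuration is really a \emph{double} of the half-plane problem (the relevant conformal modulus of $\mbox{\boldmath$G$}^*_{3H}$ with the given vertex assignment is $\tfrac12 K(k_H)/K'(k_H)$, or equivalently $k_H'\asymp C/H^2$ after the correct normalization), so that $\operatorname{Mod}(\mbox{\boldmath$G$}^*_{3H})=\frac1\pi\log H+O(1)\sim\frac1\pi\log H$.

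The main obstacle I anticipate is precisely this bookkeeping: correctly identifying which of $K/K'$ versus $K'/K$ (and with which power of $H$ inside the elliptic modulus) corresponds to $\mbox{\boldmath$G$}^*_{3H}$ with the specific ordering of vertices $(\widetilde A^-_{1H},\widetilde B^-_{1H},B^+_{1H},A^+_{1H})$, so that the leading coefficient comes out to be $1/\pi$ and not $2/\pi$. Once the reduction to the symmetric case of \cite{nguyen2023} is justified, the elliptic-integral asymptotics are routine; the delicate point is the normalization, together with verifying that the slit along $[I_{1H},J_{1H}]$ (which touches both $[A^+_{1H},B^+_{1H}]$-side and the mirror side) does not change the asymptotic order but only affects the $O(1)$ term.
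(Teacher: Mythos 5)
Your overall skeleton — exploit the symmetry of $G^*_{3H}$, map by elliptic integrals, read off the asymptotics from the expansions of $K$ and $E$ — is the same as the paper's, but the step that actually carries the content of the lemma is missing. The crux is the quantitative link between the elliptic modulus and $H$, and your proposal gets it wrong: the paper derives from the transcendental equation \eqref{Ha/sig}, via the series expansions \eqref{eq:exp_int1}--\eqref{eq:exp_int2} and \eqref{eq:com_int1_serie1}--\eqref{eq:com_int2_serie1}, that $H\alpha/\beta\sim 2C_1/k'^2$, i.e. $k'\asymp H^{-1/2}$, which is exactly what turns $\frac{2}{\pi}\log\frac{4}{k'}$ into $\frac{1}{\pi}\log H+O(1)$. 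Your guess $k_H'\asymp C/H$ produces the spurious $\frac{2}{\pi}\log H$, and the ``factor-of-two discrepancy'' you then try to absorb is not a bookkeeping artifact to be waved away — it is the sign that the rate is wrong. Your proposed fix (squeezing $k_H$ between comparison configurations via Theorem~\ref{thr:comp2}) is not carried out and cannot be indifferent to ``the precise constant $C$'': the \emph{power} of $H$ in $k_H'$ is precisely what determines whether the leading coefficient is $1/\pi$ or $2/\pi$, so declaring the constant irrelevant while leaving the exponent undetermined leaves the lemma unproved. Moreover, crude rectangle comparisons would at best give two-sided bounds of the form $c_1\log H\le\operatorname{Mod}\le c_2\log H$, not matching leading constants.

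A second gap: the marked vertices of $\mbox{\boldmath$G$}^*_{3H}$ sit at height $\pm\sigma$ strictly inside slits of half-height $\beta>\sigma$, so the modulus is not $K(k)/K'(k)$ but $K(k/\mu)/K'(k/\mu)$, where $1/\mu$ is the preimage of the interior point $\widetilde A^-_{1H}$. One must show $1-k/\mu\asymp 1-k$, and for this (as well as for controlling the parameter $\lambda$ entering \eqref{Ha/sig}) the paper needs the kernel-convergence/generalized Rad\'o argument establishing \eqref{muk}; nothing in your proposal plays this role, and the symmetric case of \cite{nguyen2023} does not apply ``verbatim'' precisely because of these interior marked points. (Minor: $M$ does not enter $G^*_{3H}$ at all; that parameter belongs to the upper-bound configuration $G_{2H}$.)
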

\begin{proof}
Let $\widetilde{D}_{1H}=-H\alpha+i\beta, \widetilde{C}_{1H}=-H\alpha-i\beta$ be the perpendicular projections of $D_{1H}, C_{1H}$ on the extension of the segment $[A_{1H}, B_{1H}]$, respectively. The conformal mapping of the exterior of two horizontal segments, $[-1/k,-1]$ and $[1,1/k]$ ($0<k<1$) onto the exterior of two vertical segments $[\widetilde{D}_{1H}, \widetilde{C}_{1H}]$ and $[D_{1H}, C_{1H}]$ has the form
\begin{equation}\label{zetaw1}
\zeta(w) =C \int\limits_0^w \frac{\left(1/\lambda^2-t^2\right) dt}{\sqrt{(1 - t^2)(1 - k^2 t^2)}}\,.
\end{equation}
Here $C\ne 0$ is the some constant, and $1/\lambda \in(1, 1/k)$,  $1/\mu\in(1, 1/\lambda)$ are respective the preimages of the points $D_{1H}$ and $\widetilde{A}^-_{1H}$.

Now let us describe the detailed formula of $\zeta(w)$, from \eqref{zetaw1} we have
\begin{align*}
\zeta(w)&=\frac{C}{k^2}\int\limits_0^w \frac{\left(k^2/\lambda^2-k^2t^2\right)dt}{\sqrt{(1 - t^2)(1 - k^2 t^2)}}=\frac{C}{k^2}\int\limits_0^w \frac{\left(1-k^2t^2\right)-\left(1-k^2/\lambda^2\right)dt}{\sqrt{(1 - t^2)(1 - k^2 t^2)}}\\
&=\frac{C}{k^2}\left[\int\limits_0^w \sqrt{\frac{1-k^2t^2}{1-t^2}}dt-\left(1-k^2/\lambda^2\right)\int_0^w\frac{dt}{\sqrt{(1 - t^2)(1 - k^2 t^2)}}\right]\\
&=\frac{C}{k^2}\left[E(w,k)-(1-k^2/\lambda^2)F(w,k) \right],
\end{align*}
where $K(w, k)$ and $E(w, k)$ are the incomplete elliptic integrals of the first and second kinds, respectively. By the definition of $\zeta(w)$, we imply that
$$
\zeta(1)=H\alpha,\qquad\zeta(1/\lambda)=H\alpha+i\beta,\qquad\zeta(1/\mu)=H\alpha+i\sigma.
$$
It is not difficult to see that (see, e.g., \cite{byrd1971handbook})
\begin{align}\label{Hakap}
H\alpha=\frac{C}{k^2} \left[E(k)-(1-k^2/\lambda^2)K(k) \right],\quad \beta =\frac{C}{k^2}[E(\ell',k')-(k^2/\lambda^2)F(\ell',k')],
\end{align}
where
\begin{align}\label{ell}
\ell'=\sqrt{1-\ell^2}, \quad \ell=\frac{k}{k'}\,\sqrt{\frac{1}{\lambda^2} -1}, \quad \frac{1}{\lambda^2}=\frac{1}{k^2} \frac{E'(k)}{K'(k)}.
\end{align}
It should be emphasized that for every $H>0$ there are unique $k=k(H)$, $\lambda=\lambda(H)$, $\ell=\ell(H)$ and $C=C(H)$ such that \eqref{zetaw1}, \eqref{Hakap} and \eqref{ell} take place, if $\alpha$, $\beta$ are fixed; moreover, $k=k(H)\to 1$, $\lambda=\lambda(H)\to 1$ as $H\to\infty$. Since all the parameters in \eqref{zetaw1} are uniquely defined by $H$, we will write $\zeta(w)=\zeta_H(w)$.

By the Riemann-Schwarz symmetry principle, $\zeta_H(w)$  maps the upper half-plane $\mathbb{H}_{\zeta}^+$ onto $G_{5H}$ which is the exterior of the union of three segments $[\widetilde{B}_{1H}, \widetilde{C}_{1H}]$, $[B_{1H}, C_{1H}]$ and $[I_{1H}, J_{1H}]$; in the following we will only consider $\zeta_H(w)$ on $\mathbb{H}_{\zeta}^+$.

From \eqref{Hakap} it implies that
\begin{align}\label{Ha/sig}
\frac{E(k)-(1-k^2/\lambda^2)K(k)}{E(\ell',k')-(k^2/\lambda^2) F(\ell',k')}=\frac{H\alpha}{\beta}.
\end{align}

Now we will show that
\begin{align}\label{muk}
\frac{1-\lambda(H)}{1-k(H)}\to\delta_1\in (0,1) \text{ and }\frac{1-\mu(H)}{1-k(H)}\to\delta_2\in (0,1) \quad\text{ as }\quad  H\to\infty.
\end{align}

Indeed, for every $H>0$ the function $\psi_H=\zeta_H(1+(1/k-1)z)-\alpha H$ maps the upper half-plane onto the domain $G_H$ which is the half-plane with two slit cuts along the segments, $[0,0+i\beta]$ and $[-2H\alpha,-2H\alpha +i\beta]$. In addition, $\psi_H$ maps the point $0$, $1$, and $\infty$ to $0^-_H$, $0^+_H$, and $\infty$, where $0^-_H$ and $0^+_H$ are the prime ends of $G_H$ with support at the origin, lying on the left and right edges of the slit along the segment $[0,0+i\beta]$. As $H\to\infty$, the family of domains $G_H$  converges to $G$ as a kernel which is the upper half-plane with a slit cut along the segment $[0, 0+i\beta]$. Then, by the generalized Rad\'o theorem, $\psi_H$ converges uniformly to  the conformal mapping $\psi$ of the upper half-plane onto $G$ such that $0$, $1$, and $\infty$ are mapped to $0^-$, $0^+$, and $\infty$, where $0^-$ and $0^+$ are the prime ends of $G$ with support at $0$, lying on the left and right edges of the slit along the segment $[0,0+i\beta]$. By the definition of $\psi_H$, we have $\psi_H((1/\lambda(H)-1)/(1/k(H)-1))=i\beta$, this deduces that $(1/\lambda(H)-1)/(1/k(H)-1)\to\delta_1$, where $\delta_1$ is the preimage of $i\beta$ under $\psi$; it is similar, we also have $\psi_H((1/\mu(H)-1)/(1/k(H)-1))=(i\sigma)^-$, and  $(1/\mu(H)-1)/(1/k(H)-1)\to\delta_2$, where $\delta_2$ is the preimage of $(i\sigma)^-$ under $\psi$. Therefore, $0<\delta_2<\delta_1<1$ and \eqref{muk} holds.

Combining \eqref{ell} and \eqref{muk} results in $\ell'=\ell'(H)\to\sqrt{\delta_1}\in(0, 1)$.

With the note that $k<\lambda<1$, from \eqref{asymke0} and \eqref{asymke1}, for $k=k(H)$ and $\lambda=\lambda(H)$, we deduce that 
\begin{align}\label{nenoHa/sig}
E(k)-(1-k^2/\lambda^2)K(k)\to 1 \quad\text{as} \quad H\to \infty.
\end{align}

From \eqref{ell} it follows that
$$
\frac{k^2}{\lambda^2}=\frac{E'(k)}{K'(k)}.
$$
Thus, for $k=k(H)$, $\lambda=\lambda(H)$ and $\ell=\ell(H)$ we have
\begin{align*}
E(\ell', k')-(k^2/\lambda^2)F(\ell', k')&=\frac{K(k')E(\ell', k')-E(k')F(\ell', k')}{K(k')}\\&\sim\frac{2}{\pi}\left[ K(k')E(\ell', k')-E(k')F(\ell', k')\right]\quad\text{ as }\quad H\to \infty.
\end{align*}
In \eqref{eq:exp_int1} and \eqref{eq:exp_int2}, replacing the parameters $x$ and $k$ with $\ell'$ and $k'$, we get
$$
E(\ell', k')=I_0(\ell')-\frac{1}{2}I_2(\ell')k'^2-\frac{1}{8}I_4(\ell')k'^4-\ldots\,,
$$
$$
F(\ell', k')=I_0(\ell')+\frac{1}{2}I_2(\ell')k'^2+\frac{3}{8}I_4(\ell')k'^4+\ldots\, .
$$
Combined with \eqref{Ipi} it shows that
$$
E(\ell', k')=I_0(\ell')-\frac{1}{2}I_2(\ell')k'^2+o(k'^2),
$$
$$
F(\ell', k')=I_0(\ell')+\frac{1}{2}I_2(\ell')k'^2+o(k'^2),
$$
uniformly with respect to $\ell'$ as $k'\to 0$.

This together with \eqref{eq:com_int1_serie1} and \eqref{eq:com_int2_serie1} implies
$$
K(k')E(\ell', k')-E(k')F(\ell', k') =\frac{\pi}{4}\left[I_0(\ell')-2I_2(\ell')\right]k'^2+ o(k'^2)
$$
uniformly with respect to $\ell'$ as $k'\to 0$.  Since $\ell'=\ell'(H)\to \sqrt{\delta_1}\in(0,1)$ as $H\to\infty$, we have
\begin{align}\label{denoHa/sig}
K(k')E(\ell', k')-E(k')F(\ell', k') \sim\frac{\pi}{4}\left[I_0(\sqrt{\delta_1})-2I_2(\sqrt{\delta_1})\right]k'^2,
\end{align}
for $k'=k'(H)$, $\ell'=\ell'(H)$ as $H\to \infty$. 

Since \eqref{I02} then $I_0(\sqrt{\delta_1})-2I_2(\sqrt{\delta_1})\neq 0$. From \eqref{Ha/sig}, \eqref{nenoHa/sig} and \eqref{denoHa/sig} we obtain for $k'=k'(H)$
$$
\frac{H\alpha}{\beta}\sim\frac{2C_1}{k'^2} \quad\text{ as }\quad H\to\infty,
$$
with some constant $C_1\neq 0$.
This implies that
\begin{align}\label{asym_H}
\frac{1}{\pi}\log H\sim\frac{1}{\pi}\log\frac{1}{1-k}
\quad\text{ as \quad $k\to 1$}.
\end{align}

Consider the quadrilateral $\mbox{\boldmath$G$}^{*+}_{3H}=(G^{*+}_{3H}; \widetilde{A}^-_{1H}, J_{1H}, I_{1H}, A^+_{1H})$ where $G^{*+}_{3H}$ is the upper half of $G^*_{3H}$. Since it is conformally equivalent to the quadrilateral which is the upper half-plane with vertices $-1/k$, $-1/\mu$, $1/\mu$ and $1/k$,
 we conclude  that
$$
\operatorname{Mod}(\mbox{\boldmath$G$}^{*+}_{3H})= \frac{2K(k/\mu)}{K'(k/\mu)}.
$$

With the help of the symmetry principle, we see that
$$\operatorname{Mod}(\mbox{\boldmath$G$}^*_{3H})=\frac{K(k/\mu)}{K'(k/\mu)}\sim \frac{1}{\pi}\log\frac{1}{1-k/\mu}, \quad k=k(H),\quad \mu=\mu(H)\quad\text{ as }\quad  H\to\infty.
$$

As $H\to\infty$, the following estimates are going
$$1-\frac{k(H)}{\mu(H)}\sim\mu(H)-k(H)=(1-k(H))-(1-\mu(H))\sim(1-\delta_2)(1-k(H)),$$
therefore,
$$
\frac{1}{\pi}\log\frac{1}{1-k/\mu}\sim \frac{1}{\pi}\log\frac{1}{1-k},
$$
and, this along with \eqref{asym_H} implies \eqref{bound_low}.
\end{proof}

\textit{Proof of the main theorem.}  From Lemma~\ref{lem_mod_up}, Lemma~\ref{lem_mod_low} and the inequality \eqref{mod_bound1}, we conclude that the asymptotic formula \eqref{eq:main} is satisfied. Therefore, Theorem~\ref{thr:main} has been fully verified.

\section*{FUNDING}
The work of the first author is performed under  the Development Program of the Scientific and Educational Mathematical Center of the Volga Region (agreement No.~075-02-2022-882).


\end{document}